\documentclass[11pt]{article}

\usepackage{enumitem}
\usepackage{epsfig}
\usepackage[mathscr]{euscript}
\usepackage{csquotes}

\usepackage[linesnumbered, ruled,noend,noline]{algorithm2e}

\usepackage{float}
\usepackage{graphicx}
\usepackage{ stmaryrd }
\usepackage{multirow}
\usepackage{gensymb}
\usepackage{array}
\usepackage{listings}
\usepackage{amsthm}

\usepackage{amssymb}
\usepackage{graphicx,amsmath,calc}

\newtheorem{theorem}{Theorem}[section]
\newtheorem{lemma}[theorem]{Lemma}
\newtheorem{corollary}[theorem]{Corollary}
\newtheorem{definition}[theorem]{Definition}

\numberwithin{equation}{section}

\title{Intrinsic Sequentiality in $\mathbf{P}$: Causal Limits of Parallel Computation}
\author{Jing-Yuan Wei\thanks{Wide Area Grid-Forming (Shenzhen) Energy Co. Ltd.
3039 Bao'an North Road, Shenzhen, China. Email: weijingyuan@gmail.com}}
\date{}

\begin{document}
\maketitle

\begin{abstract}
We study a polynomial-time decision problem in which each input
encodes a depth-$N$ causal execution in which a single non-duplicable
token must traverse an ordered sequence of steps, revealing at most
$O(1)$ bits of routing information at each step.
The uncertainty in the problem lies in identifying the delivery path
through the relay network rather than in the final accept/reject outcome,
which is defined solely by completion of the prescribed execution.

A deterministic Turing machine executes the process in $\Theta(N)$ time.
Using information-theoretic tools - specifically cut-set bounds for relay
channels and Fano's inequality - we prove that any execution respecting
the causal constraints requires $\Omega(N)$ units of causal time,
thereby ruling out asymptotic parallel speedup.

We further show that no classical $\mathbf{NC}$ circuit family can
implement the process when circuit depth is interpreted as realizable
parallel time.
This identifies a class of polynomial-time problems with intrinsic causal
structure and highlights a gap between logical parallelism and causal
executability.
\end{abstract}

\noindent\textbf{Keywords:} Intrinsic sequentiality;
Causal computation;
Communication constraints;
Parallel computation;
Relay channels.

\section{Introduction}

Recent studies of parallel and distributed systems emphasize that
communication costs --- latency, bandwidth, and propagation delays --- often dominate
end-to-end performance~\cite{Czarnul2025}.
This motivates a broader principle: under realistic constraints of causality
and finite information rate, some tasks exhibit \emph{intrinsic sequentiality}
and resist the polylogarithmic speedups typically associated with parallel
computation.

Sequential dependency chains already appear in classical problems such as
\emph{pointer chasing}~\cite{KushilevitzNisan1997}, where a value is obtained by
repeatedly applying functions along a chain of length $N$.
Because each step depends on the previous one, the computation cannot be
shortcut by parallel evaluation, illustrating that some tasks possess an
inherent sequential structure independent of the amount of available hardware.

In this work we examine such sequential processes from a different perspective.
Following an information-theoretic view of computation developed in~\cite{wei2026},
we focus not on the evaluation of a final value, but on the
\emph{resolution of routing uncertainty along a causal chain}.
The central uncertainty in our model is not a hidden message to be decoded,
but the identity of the correct delivery path through a hierarchical relay
network, where each step reveals only limited information about that path.

A useful intuition is provided by global courier networks
(e.g., FedEx, UPS, DHL), where a parcel traverses a sequence of relay hubs
\[
\textsc{Sender} \;\to\;
\text{country} \;\to\;
\text{province} \;\to\;
\text{city} \;\to\;
\text{street} \;\to\;
\textsc{Receiver}.
\]
Although many hubs operate in parallel, any given parcel is held by exactly one
hub at a time and advances sequentially, hop by hop, through this hierarchy.
At each hub, only a small amount of new routing information becomes relevant:
which outgoing branch among finitely many choices is the next branch in the prescribed path.
Thus the delivery path is revealed gradually as the parcel progresses through
the network.

Inspired by this form of causal propagation, we formalize the
\emph{Hierarchical Temporal Relay} (HTR) problem.
An HTR instance specifies a routing-and-verification process in which a
unique, non-duplicable token must traverse $N$ ordered steps.
At each step the current relay reveals and forwards only $O(1)$ bits of routing
information before handing the token to the next level.
Correctness is defined solely by faithful completion of this prescribed causal
execution.

Viewed from this perspective, information propagation itself becomes a
computational resource.
If each hop has $d$ possible child branches, then the uncertainty in HTR is the
identity of the target leaf among $d^N$ possible destinations, with entropy
\[
H(M)=N\log d.
\]
Because each causal step reveals only bounded routing information, the process
forms a communication cascade in which information about the delivery path
accumulates gradually along the causal chain.

HTR therefore induces a communication process with
\emph{zero parallel capacity in time}: increasing spatial parallelism does not
increase the rate at which information about the delivery path can propagate
along the chain.
A deterministic Turing machine executes the process in $\Theta(N)$ time.
Using standard information-theoretic tools - specifically cut-set bounds for relay
channels and Fano's inequality - we show that any execution respecting the causal
constraints requires $\Omega(N)$ units of causal time.

This perspective also clarifies the difference from classical dependency-chain
problems.
While pointer-chasing formulations emphasize the difficulty of evaluating a
deeply composed function, the HTR formulation emphasizes the progressive
resolution of routing uncertainty under causal constraints.
The computation succeeds only when information identifying the delivery path
has propagated through the entire relay chain, making information propagation
itself the fundamental resource.

HTR therefore exposes a gap between \emph{logical parallelism} and
\emph{causal executability}.
While classical $\mathbf{NC}$ circuits evaluate predicates in polylogarithmic
depth under idealized assumptions of global aggregation, they cannot implement
HTR when circuit depth is interpreted as realizable parallel time.
This reflects not a weakness of $\mathbf{NC}$ as a logical class, but a
limitation of its abstraction when applied to tasks whose semantics require
causal execution.

Section~\ref{sec:htr} defines the HTR problem.
Section~\ref{sec:parallelism} establishes a linear lower bound under finite-rate
causal communication.
Section~\ref{sec:NC-vs-HTR} shows that $\mathbf{NC}$ circuits cannot implement
HTR.
Section~\ref{sec:causality-abstraction} discusses broader implications.
The paper concludes with related work and final remarks.

\section{A Polynomial-Time Decision Problem with Intrinsic Causality}
\label{sec:htr}

To formalize the causally constrained process described in the introduction, we
define a decision problem whose specification explicitly includes a hop-by-hop
execution structure.
Its defining feature is that causal order is not an implementation choice but an
inseparable part of the problem instance.
From an information-theoretic viewpoint, this makes HTR a communication process
whose execution has zero parallel capacity in time.

\paragraph{Semantically defined states.}
An intermediate state is \emph{semantically defined} if it is uniquely determined
by a causally valid prefix of the input and if its existence is necessary for the
correct interpretation of subsequent steps.
In HTR, token states and validation predicates become defined only after their
corresponding prefixes have been causally executed; they cannot be bypassed,
reordered, or inferred in advance without invalidating the computation.

\paragraph{Predicate versus process.}
A predicate specifies only a Boolean outcome as a function of the input.
A process, by contrast, specifies a required causal execution whose intermediate
states are semantically defined and must be respected for correctness.
Equivalently, a process may be viewed as a predicate together with an intrinsic
causal execution structure encoded in the instance itself.

\subsection{Hierarchical encodings carry causal meaning}

A useful real-world analogy is the SWIFT (BIC) banking code
\[
\textsc{payer} \;\to\;
\text{AAAA}\;\text{BB}\;\text{CC}\;\text{DDD}
\;\to\; \textsc{payee},
\]
whose components identify institution, country, location, and branch.
Although syntactically a flat string, such codes are resolved hierarchically:
later components have no operational meaning until earlier ones are determined.
The encoding therefore specifies not merely data, but an ordered resolution
protocol whose semantics depend on respecting a prescribed causal order.

In HTR, the causal structure is embedded directly in the instance
representation - motivated by hierarchical encodings such as SWIFT codes - in order
to isolate this phenomenon in a simple computational form.

\subsection{Hierarchical Temporal Relay (HTR)}

\paragraph{Handoff steps and causal time.}
We distinguish between a \emph{handoff step} and \emph{causal time}.
A handoff step consists of performing a local validation and,
upon success, transferring the token to the corresponding child.

Causal time is an abstract unit of parallel time during which information may
propagate subject to causal constraints.
Because the token is non-duplicable and no step may be skipped, at most one
handoff step can occur per unit of causal time.
Thus the canonical execution of HTR requires at least $N$ units of causal time.

\begin{definition}[Hierarchical Temporal Relay (HTR)]
Fix a constant branching factor $d \ge 2$ and let $T_{d,N}$ be a complete $d$-ary
tree of depth $N$ with $d^N$ leaves.
An instance of \textsf{HTR} is specified by a sender information $a_0$ and a
hierarchical address
\[
\ell = (a_0, a_1, \ldots, a_N),
\qquad
a_0 = 1,\quad
a_i \in \{0,1\}^{\lceil \log_2 d \rceil}
\ \text{for } 1 \le i \le N,
\]
which identifies a unique target leaf.
The canonical encoding of the instance is the concatenated string
\[
x = a_0 \,\|\, a_1 \,\|\, \cdots \,\|\, a_N,
\]
of total length $n = \Theta(N \log d)$.

Although $x$ is static, its semantics are causal.
A single non-duplicable token begins at the root of $T_{d,N}$ and evolves through
a sequence of handoff steps.
At each step $i$, only the current token-holder may act, performing a local
validation depending only on the current state $s_i$ and the routing
information $a_i$, which determines the next branch.
If the validation succeeds, the token is transferred to the corresponding child;
otherwise execution halts and rejects.

The decision outcome is defined only as the terminal state of this prescribed
$N$-step execution.
\end{definition}

\paragraph{Causal availability of routing information.}
Although the hierarchical address $\ell=(a_0,\ldots,a_N)$ is explicitly encoded
in the input string $x$, the information it contains is not operationally
available everywhere in the system simultaneously.
Routing decisions are made locally and causally: a relay learns the routing
information relevant to its step only when the token arrives.
Thus the execution can be viewed as progressively revealing the delivery path
along the causal chain.

\paragraph{Internal semantic structure.}
Let $S$ be a finite token-state space with $|S| = O(1)$.
The token carries the current execution state $s_i \in S$.
At step $i$, the token-holder performs a local validation depending only on
the current state $s_i$ and the routing information $a_i$, represented by a
predicate
\[
f_i : S \times \{0,1\}^{\lceil \log_2 d \rceil} \to \{0,1\},
\qquad 1 \le i \le N,
\]
computable in $O(1)$ time.

If $f_i(s_i,a_i)=1$, the token state is updated locally and the token is handed
to the child indexed by $a_i$, producing the next state $s_{i+1}$.
If $f_i(s_i,a_i)=0$, execution halts and rejects.
At the final step, the outcome is
\[
\text{accept if } f_N(s_N,a_N)=1,\qquad \text{reject otherwise}.
\]
No strict prefix of an execution with all prior validations successful suffices
to determine the outcome; if some earlier validation fails, the execution halts
immediately and rejects.

\paragraph{Relay-channel interpretation.}
The causal structure of \textsf{HTR} coincides with that of a serial
communication relay chain.
Let
\[
M=(a_1,\ldots,a_N)
\]
denote the hierarchical address of the target leaf.
Although $M$ is encoded in the input, its routing information becomes
operationally available only through successive token handoffs.

In this interpretation, the root initially possesses the routing information,
and each hop $i\to i{+}1$ reveals the next routing information $a_i$ needed to
select the correct child among $d$ possibilities.
Thus information identifying the delivery path propagates step by step
along the relay chain.

The non-duplicable token enforces half-duplex operation and precludes
parallel transmissions.
Correctness is defined only upon successful traversal of the entire chain,
so the routing information must propagate through every relay in order.
Consequently, the process has zero parallel capacity in time.

\paragraph{Relation to classical relay channels.}
This causal structure closely parallels the serial relay-channel model
studied in information theory.
In such models, a message originating at a source must propagate through
a sequence of intermediate relays before reaching the destination,
with each relay able to transmit only limited information per unit time.
Classical cut-set bounds therefore imply that the end-to-end transmission
rate cannot exceed the minimum capacity of the links in the chain.

The \textsf{HTR} process can be viewed as a discrete computational analogue
of this setting.
At each causal step, the current token-holder acts as a relay that reveals
at most $O(1)$ bits of routing information and forwards the token to the
next node.
Because the token is non-duplicable and the causal chain cannot be
bypassed, information about the delivery path can advance by at most
one hop per unit of causal time.

Consequently, the execution time of the process is governed by the same
information-propagation constraint that governs relay-channel capacity:
the routing information identifying the delivery path must traverse the
entire relay chain.
This viewpoint allows the causal-time lower bound for \textsf{HTR} to be
derived using standard information-theoretic arguments such as cut-set
bounds and Fano's inequality.

\paragraph{Decision version.}
The decision problem asks whether the prescribed HTR execution terminates in the
\emph{accept} state.

\paragraph{Sequential complexity.}
A deterministic Turing machine can simulate the HTR process by maintaining the
current token state and performing one local update per handoff step.

\begin{lemma}
$\textsf{HTR} \in \mathbf{P}$.
\end{lemma}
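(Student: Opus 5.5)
The plan is to give an explicit deterministic simulation of the prescribed $N$-step execution and bound its running time by a polynomial in the input length $n=\Theta(N\log d)$. Because $d$ is a fixed constant, each block $a_i$ has constant length $\lceil\log_2 d\rceil$. The token-state space $S$ has $|S|=O(1)$. Each predicate $f_i$, together with its accompanying state update, is computable in $O(1)$ time. So the only real work is to check that the machine can locate the $i$-th block and apply the $i$-th local rule without any overhead that is superpolynomial in $N$.

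The machine $\mathcal{M}$ works as follows. It first checks that the input has the canonical form: it reads $a_0=1$ and checks that the remainder has length divisible by $\lceil\log_2 d\rceil$. This is a single linear scan. It initializes the token state $s_1$ to the fixed initial state at the root. It then sweeps the input head left to right once. At step $i$ it reads the block $a_i$ into its finite control, which is possible because the block has constant length. It evaluates $f_i(s_i,a_i)$ and rejects immediately if the value is $0$. Otherwise it computes $s_{i+1}$ by the local update rule and advances to the next block. After block $a_N$ it accepts if and only if $f_N(s_N,a_N)=1$.

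Correctness holds by construction. The sequence of states $\mathcal{M}$ maintains is exactly the sequence $s_1,\dots,s_N$ of the prescribed execution, and $\mathcal{M}$ halts and rejects exactly when the definition does. The tree $T_{d,N}$ is never materialized. The current child index is just $a_i$, and the only information about the token's position that affects later steps is the state $s_i$. This matters because the tree has $d^N$ leaves, so storing it would be exponential.

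For the running time, there are $N$ iterations, each costing $O(1)$ time to read a block and $O(1)$ time to evaluate and update, for a total of $O(N)=O(n)$ steps.

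The main obstacle is a uniformity subtlety. The $f_i$ are indexed by $i$, so one must make sure $\mathcal{M}$ can evaluate the correct $f_i$ without storing $N$ distinct tables. I would resolve this by reading the definition as saying that the family $\{f_i\}$ is uniformly computable, meaning $f_i(s,a)$ is computable in $O(1)$ time, or in the worst case $\mathrm{poly}(\log i)$ time with a step counter. Either reading still gives $O(N\,\mathrm{poly}\log N)\subseteq\mathrm{poly}(n)$, so $\textsf{HTR}\in\mathbf{P}$. If the $f_i$ depend on $i$ only through a finite set of rules, as the $O(1)$-time assumption suggests, the bound tightens to the $\Theta(N)$ claimed in the text.
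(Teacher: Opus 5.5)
Your proposal is correct and matches the paper's proof: a single left-to-right scan keeps the $O(1)$-size token state, evaluates $f_i(s_i,a_i)$ block by block, rejects on the first failure, and so runs in $O(N)=\mathrm{poly}(n)$ time. Your extra remarks spell out assumptions the paper leaves implicit (input-format checking, never materializing $T_{d,N}$, and uniform computability of the family $\{f_i\}$), but they do not change the argument.
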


\begin{proof}
Let $n = \Theta(N \log d)$ be the input length.
On input $x = a_0 \| a_1 \| \cdots \| a_N$, a deterministic Turing machine parses
the blocks sequentially and simulates the prescribed execution.
It stores the current token state $s_i \in S$, which requires $O(1)$ space.
At step $i$, it evaluates $f_i(s_i,a_i)$ and, if this value is $1$, updates the
token state to $s_{i+1}$ and applies the transition specified by $a_i$.
Each handoff step requires $O(1)$ time, and the simulation performs at most $N$
steps.
Hence the total running time is $O(N) = \mathrm{poly}(n)$.
\end{proof}

\paragraph{Causally structured problems in $\mathbf{P}$.}
It is convenient to refer to problems such as \textsf{HTR} as
\emph{causally structured problems in $\mathbf{P}$}.
In such problems, instances encode a prescribed causal execution,
and correctness is defined by the terminal state of an $N$-step process
with causally defined intermediate states, rather than by evaluation
of a single static predicate.

We use the informal notation $\mathbf{P_{\mathrm{causal}}}$ to denote
this subclass of $\mathbf{P}$.
The purpose of this terminology is descriptive rather than
foundational: it highlights problems whose execution semantics
intrinsically constrain the rate of causal information propagation.
In these problems, the order of execution is fixed by the instance
specification itself and must be respected by any correct implementation,
rather than being an implementation choice of the computational model.

To avoid confusion with circuit-theoretic notions of depth, we distinguish
\emph{logical} from \emph{causal} sequentiality.
Circuit depth in $\mathbf{NC}$ reflects logical dependence in the
specification of a computation, whereas the sequentiality in HTR arises
from constraints on causal order, state evolution, and information
propagation during execution.
The relationship between these notions depends on the underlying execution
model and lies beyond the scope of this work.

\section{Parallelism Under Finite-Rate Causal Communication}
\label{sec:parallelism}

We analyze the HTR process in an abstract execution framework that enforces
\emph{hop-by-hop causality} and \emph{finite-rate information flow}.
No assumptions are made about any specific machine model, circuit formalism,
or computational architecture.
Causality is treated as an intrinsic property of the problem instance itself.

Let
\[
M=(a_1,\ldots,a_N)
\]
denote the hierarchical address of the target leaf.
We view $M$ as a random message drawn uniformly from the set
$\mathcal M=[d]^N$, so that
\[
|\mathcal M|=d^N,
\quad \text{ and entropy   }
H(M)=N\log d .
\]

Although the system may contain exponentially many agents,
the non-duplicable token constraint enforces that at any causal time
\emph{only the current token-holder} may transmit information across a boundary
$i \to i{+}1$.
Each causal time therefore permits \emph{at most one} such handoff.
Consequently, every hop behaves as a communication channel of finite capacity
\[
C_i = \log d + O(1)
\quad \text{bits per unit of causal time},
\]
which is $O(1)$ since the branching factor $d$ is assumed to be constant.

Initially, the hierarchical address of the target leaf $\ell$
(equivalently, the message $M$ encoded by that address)
is known only at the root.
All other agents are independent of $M$ at time $0$
and may acquire information about $\ell$
only through successive causal token handoffs.

In this interpretation, the hierarchical address $M$ represents the
uncertainty to be resolved during execution.
Each causal handoff reveals at most $O(1)$ bits about $M$,
so the computation progressively accumulates information about $M$
over causal time.

\paragraph{Causal information flow.}
Under the canonical hierarchical encoding,
any valid execution of HTR induces a strictly ordered cascade of hop boundaries
\[
0 \;\to\; 1 \;\to\; \cdots \;\to\; N,
\]
where crossing boundary $i \to i{+}1$ corresponds to advancing the token
from level $i$ to level $i{+}1$.
Information about $M$ must traverse these boundaries in order;
it cannot be rerouted, duplicated, or bypassed.

We index global system states by hop and causal time:
$X_i^{(t)}$ denotes the system state conditioned on the token
being at level $i$ after $t$ units of causal time.
Along any valid execution, the token advances by \emph{at most one} hop per time.
Thus information can cross boundary $i \to i{+}1$
only during a causal time in which that hop occurs.

Formally, whenever boundary $i \to i{+}1$ is crossed at time $t$,
the HTR model enforces the finite-capacity constraint
\[
I\!\left(M \,;\, X_{i+1}^{(t+1)} \mid X_{i+1}^{(t)}\right)
\;\le\; C_i .
\]
If the boundary is not crossed at time $t$,
then $X_{i+1}^{(t+1)} = X_{i+1}^{(t)}$
and the mutual information increment is zero.

\paragraph{End-to-end information bound.}
By the cut-set bound for relay channels
\cite{CoverElGamal1979,ElGamalKim2011}, the achievable end-to-end
communication rate in a serial relay chain is bounded by
$\min_{0 \le i < N} C_i$.
Consequently, within $T$ units of causal time the total mutual
information deliverable to level $N$ satisfies
\begin{equation} \label{eq:TandI}
I(M ; X_N^{(T)})
\;\le\;
T \cdot \min_{0 \le i < N} C_i .
\end{equation}

In the HTR problem, correctness is defined by faithful completion
of the prescribed $N$-step causal execution.
Under the uniform distribution on hierarchical addresses,
the terminal behavior depends on revealing
\[
H(M)=N\log d
\]
bits of routing information.
No strict prefix of an execution with all prior validations successful
suffices to determine the outcome.

Let $\hat M=\hat M(X_N^{(T)})$ be the receiver's estimate of $M$, and let
$P_e=\Pr[\hat M\neq M]=o(1)$.
Fano's inequality gives
\[
H(M\mid X_N^{(T)})
\;\le\;
h(P_e)+P_e\log(|\mathcal M|-1),
\]
where $h(\cdot)$ is the binary entropy function.
Since $|\mathcal M|=d^N$, this implies
\[
H(M\mid X_N^{(T)})=o(H(M)).
\]
Therefore
\begin{equation} \label{eq:IandH}
I(M;X_N^{(T)})
\;=\;
H(M)-H(M\mid X_N^{(T)})
\;\ge\;
(1-o(1))H(M).
\end{equation}

Combining \eqref{eq:TandI} and \eqref{eq:IandH} yields
\[
T \cdot \min_{0 \le i < N} C_i
\;\ge\;
(1-o(1))H(M).
\]

\begin{theorem}
\label{thm:lower-bound}
Any execution of the HTR process that respects
hop-by-hop causality,
finite per-hop communication capacity,
and the non-duplicable token constraint
must satisfy
\[
T
\;\ge\;
\frac{(1-o(1))H(M)}{\min_i C_i}
\;=\;
\Omega(N),
\]
even in the presence of polynomial or exponential spatial parallelism.
\end{theorem}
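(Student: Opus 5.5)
The plan is to formalize the informal derivation that precedes the statement: model the execution as a random process driven by a uniformly random message $M=(a_1,\ldots,a_N)\in[d]^N$. I would then sandwich the mutual information $I(M;X_N^{(T)})$ delivered to level $N$ between an upper bound that is linear in $T$ and a lower bound that is linear in $N$.

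\emph{Step 1 (upper bound via the cut).} I will fix the last boundary $N-1\to N$ as a cut. I will then apply the chain rule over causal time:
\[
I(M;X_N^{(T)}) \le I(M;X_N^{(0)}) + \sum_{t=0}^{T-1} I\!\left(M;X_N^{(t+1)}\mid X_N^{(t)}\right).
\]
The first term vanishes, because agents beyond the root are independent of $M$ at time $0$. Each increment is bounded by $C_{N-1}$ if the boundary is crossed at time $t$, and is zero otherwise, by the finite-capacity constraint stated before \eqref{eq:TandI}. The same argument applies to any cut $i\to i+1$, since information at level $N$ must have passed every earlier boundary; this is exactly the serial cut-set bound of~\cite{CoverElGamal1979,ElGamalKim2011}. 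Together these give \eqref{eq:TandI} with $\min_i C_i$.

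The non-duplicable token constraint is what makes "at most one crossing per unit of causal time" valid regardless of how many agents run in parallel. I would stress this point, because it is where spatial parallelism is neutralized: extra agents contribute nothing across the cut unless they hold the token.

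\emph{Step 2 (lower bound via Fano).} I will use the requirement that correct completion identifies $M$ at level $N$ with error $P_e=o(1)$. This is justified by the semantics: no strict prefix determines the outcome, and each $a_i$ selects the child. By Fano's inequality,
\[
H(M\mid X_N^{(T)})\le 1+P_e N\log d = o(N\log d).
\]
This yields \eqref{eq:IandH}.

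\emph{Step 3 (combine).} I will divide to get $T\ge (1-o(1))N\log d/\min_i C_i$. Since $C_i=\log d+O(1)=O(1)$ for constant $d$, this gives $T=\Omega(N)$.

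The main obstacle is Step 2's hypothesis, namely justifying that the terminal state at level $N$ must actually "decode" $M$. The decision output is a single bit, so Fano applies only to the execution state, not to the accept/reject answer. I would handle this by taking as part of the model that a valid execution places the token at the leaf addressed by $\ell$. Since the leaf position itself determines $M$, this gives $P_e=0$ for faithful executions, and the $o(1)$ slack then only covers imperfect executions.
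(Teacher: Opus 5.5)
Your proposal is correct relative to the paper's model and follows the paper's proof essentially step for step. As in \eqref{eq:TandI}, you bound $I(M;X_N^{(T)})$ above by $T\min_i C_i$ using the per-hop capacity constraint and the serial cut-set bound; as in \eqref{eq:IandH}, you bound it below by $(1-o(1))H(M)$ using Fano, and dividing gives $T=\Omega(N)$. Your explicit chain-rule derivation of the cut bound, and your justification of the Fano hypothesis through the token's terminal leaf position rather than the one-bit decision output, fill in details that the paper only asserts, but they do not change the route.
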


Equivalently, the HTR execution induces a communication process whose
information propagation rate is constant in causal time and cannot be
increased by spatial parallelism.

\begin{corollary}
\label{cor:no-physical-speedup}
In any execution model where one unit of parallel time equals one unit
of causal time and each hop can transmit at most $O(1)$ bits
per unit of causal time, the HTR process requires parallel time
$\Omega(N)$.
\end{corollary}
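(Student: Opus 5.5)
The corollary should follow by instantiating Theorem~\ref{thm:lower-bound} in the stated execution model. I will check that such a model satisfies the hypotheses of the theorem, and then translate causal time into parallel time using the assumed identification of units.

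\emph{Step 1: verify the hypotheses.} Take any execution model in which one unit of parallel time is one unit of causal time and each hop carries at most $O(1)$ bits per unit. Such a model realizes an HTR execution satisfying the three hypotheses of Theorem~\ref{thm:lower-bound}:
\begin{enumerate}[label=(\roman*)]
\item hop-by-hop causality holds because the instance semantics fix the order $0\to 1\to\cdots\to N$;
\item finite per-hop capacity is exactly the assumption $C_i = O(1)$;
\item the non-duplicable token constraint is part of the HTR definition, so it is inherited by any correct implementation.
\end{enumerate}
The one thing to argue is that a parallel implementation cannot escape these constraints by extra internal wiring. The model's hypothesis is stated per hop and per unit of causal time, so any channel crossing boundary $i\to i{+}1$ is counted in $C_i$. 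Any such side-channel is therefore already charged.

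\emph{Step 2: apply the bound.} Theorem~\ref{thm:lower-bound} gives
\[
T \ge \frac{(1-o(1))\,N\log d}{\min_i C_i}.
\]
Since $d$ is constant and $\min_i C_i = O(1)$, we get $T=\Omega(N)$ units of causal time, independent of how many processors are used.

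\emph{Step 3: convert units.} By hypothesis, parallel time equals causal time unit for unit. Hence the parallel time is also $\Omega(N)$.

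\emph{Main obstacle.} The expected obstacle is Step 1: making precise that \emph{every} correct parallel implementation in the model induces a system-state process $X_i^{(t)}$ to which the per-boundary mutual-information constraint and the Fano argument apply. In particular, the receiver's state at level $N$ after $T$ steps must determine the outcome with $P_e=o(1)$. I would handle this by defining $X_i^{(t)}$ as the joint state of all agents at levels $\ge i$ at time $t$. Correct completion of the prescribed execution then forces level $N$ to have effectively identified $M$, and the corollary follows.
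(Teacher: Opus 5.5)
Your argument matches the paper's: the corollary is read off from Theorem~\ref{thm:lower-bound} by substituting $H(M)=N\log d$, using $\min_i C_i=O(1)$, and identifying one unit of parallel time with one unit of causal time, which is exactly your Steps~2--3. The ``main obstacle'' you raise concerns the modelling assumptions behind the theorem, which the paper simply takes as part of the HTR model, so the corollary does not need it. Your proposed redefinition of $X_i^{(t)}$ as the joint state of all agents at levels $\ge i$ is also not the paper's, and it would violate the per-boundary bound $C_i=\log d+O(1)$ for large $i$: at the crossing, the landing node of the token already encodes all of $a_1,\dots,a_{i+1}$, i.e.\ about $(i+1)\log d$ bits.
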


Corollary~\ref{cor:no-physical-speedup} reflects a familiar phenomenon:
some processes are intrinsically sequential.
Parallel resources may assist execution,
but they cannot compress a fixed causal timeline.
HTR exhibits exactly this structure:
a single, non-duplicable state must traverse
a prescribed causal chain of hops,
and no amount of parallelism can shorten that chain.

\section{NC Circuits as a Model of Parallel Time}
\label{sec:NC-vs-HTR}

We now examine how the lower bound of
Section~\ref{sec:parallelism} interacts with the classical notion of parallel
time embodied in $\mathbf{NC}$ circuits.
The central distinction is that $\mathbf{NC}$ measures \emph{logical depth}
under idealized, non-causal access assumptions,
whereas HTR specifies a \emph{causally ordered execution}
whose semantics depend on respecting a fixed sequence of state transitions.

A Boolean circuit is a static object computing a function
\(x \mapsto y\).
To apply circuit depth to a \emph{process} rather than a function,
one must interpret depth as execution time and impose a causal semantics
on inter-layer information flow.

\paragraph{Causal interpretation of circuit depth.}
We interpret a depth-$D$ Boolean circuit as a synchronous parallel
computation executing for $D$ units of \emph{causal time},
where circuit layer $t$ corresponds to causal time $t$.
Information flows only forward from layer $t$ to layer $t{+}1$.
Thus the circuit-time index $t$ tracks elapsed causal time, whereas the
boundary index $i$ tracks position along the HTR relay chain; under the
constraints below, one unit of causal time can advance the token across
at most one boundary.

To model HTR within this framework, we impose the following causal constraints:
(i) the token state is unique and non-duplicable;
(ii) in each unit of causal time the execution may advance the token across
\emph{at most one} hop boundary $i \to i{+}1$ (or halt);
and (iii) any update across boundary $i \to i{+}1$ occurs only via the token
crossing that boundary. The token's next state may depend only on its current
local state and on the routing information $a_i$ given in the input, and the
state transition during the crossing is restricted to what can be realized
within one causal time, i.e., within the finite per-time capacity bound $C_i$.

We say that a circuit family \emph{implements} the HTR process
if, under this interpretation, the layer-by-layer evolution of the circuit
corresponds to a valid HTR execution respecting hop-by-hop causality,
finite-rate communication, and the non-duplicable token constraint.

Under this interpretation, any depth-\(D\) circuit that implements HTR
induces a valid HTR execution lasting at most \(D\) units of causal time.
Consequently, the communication-limited lower bound established in
Section~\ref{sec:parallelism} applies directly.

\begin{theorem}
\label{thm:htr-depth}
In the standard Boolean circuit model (uniform families of
polynomial-size, constant-fan-in circuits),
no polylogarithmic-depth circuit family - and therefore no
$\mathbf{NC}$ circuit family - can implement the HTR process.
\end{theorem}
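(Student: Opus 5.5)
The plan is to argue by contradiction, reducing the statement to Theorem~\ref{thm:lower-bound} (equivalently Corollary~\ref{cor:no-physical-speedup}) through the causal interpretation of circuit depth fixed above. Suppose some uniform, polynomial-size, constant-fan-in family $\{C_n\}$ of depth $D(n)=O(\log^k n)$ implements HTR. By definition of ``implements'', the layer-by-layer evolution of $C_n$ on any instance corresponds to a valid HTR execution satisfying (i)--(iii). So I would first make this correspondence explicit. Layer $t$ is causal time $t$. The global state $X_i^{(t)}$ is the collection of wire values at layer $t$ together with the token's position $i$. Constraint (ii) says the token crosses at most one boundary per layer, and constraint (iii) says the increment $I(M;X_{i+1}^{(t+1)}\mid X_{i+1}^{(t)})$ is bounded by $C_i=\log d+O(1)$. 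This places the circuit execution inside the framework of Section~\ref{sec:parallelism} with total causal time $T\le D(n)$.

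Second, I would check that the information requirement transfers. Take $M$ uniform on $[d]^N$. A circuit that correctly decides HTR while implementing the process must complete all $N$ handoffs, since no strict prefix determines the outcome. Its terminal layer therefore determines the delivery path, so $\hat M$ can be read off with $P_e=0$, or at least $o(1)$. Fano's inequality then gives \eqref{eq:IandH}, and the cut-set bound \eqref{eq:TandI} applies to the induced execution. Theorem~\ref{thm:lower-bound} yields $D(n)\ge T\ge (1-o(1))N\log d/\min_i C_i=\Omega(N)$. Because $n=\Theta(N\log d)$ with $d$ constant, we have $N=\Theta(n)$, so $D(n)=\Omega(n)$. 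This contradicts $D(n)=O(\log^k n)$ for all large $n$. Since every $\mathbf{NC}$ family has polylogarithmic depth, the claim for $\mathbf{NC}$ follows at once. Uniformity and polynomial size are never used; the argument rules out even nonuniform exponential-size polylog-depth families, which matches the remark in Theorem~\ref{thm:lower-bound} about spatial parallelism.

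The main obstacle is the first step: justifying that a circuit's layer structure really induces a token-respecting execution in which $X_N^{(T)}$ is reached only through the chain. A Boolean circuit reads all input bits $a_1,\dots,a_N$ at layer $0$, and a balanced-tree evaluation of the composition of the $f_i$ (as in parallel prefix over the finite transition monoid on $S$) would decide the predicate in depth $O(\log N)$. The lower bound therefore cannot come from the function computed. It must come entirely from the definition of ``implements'', which restricts the admissible circuits to those whose evolution obeys (i)--(iii), and in particular rules out gates at level $i{+}1$ acquiring information about $a_i$ before the token crosses. I would state this explicitly and verify that the per-layer constraint (iii) is exactly the hypothesis of the mutual-information increment bound. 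The proof then concerns causal executability, not Boolean evaluation, consistent with the paper's distinction between predicate and process.
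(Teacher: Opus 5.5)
Your proposal is correct and matches the paper's proof: each layer is read as one unit of causal time, so a depth-$D$ family that implements HTR induces a valid execution of at most $D$ causal steps, and Theorem~\ref{thm:lower-bound} then forces $D=\Omega(N)$, which contradicts polylogarithmic depth for large $N$. Re-deriving the Fano and cut-set steps is unnecessary once you cite Theorem~\ref{thm:lower-bound}. Your observation that the bound comes entirely from the definition of ``implements'' is accurate, since the bare predicate is computable in $O(\log N)$ depth by parallel prefix; the paper's argument also rests on that definition.
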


\begin{proof}
Under the causal interpretation of circuit depth,
each circuit layer corresponds to one unit of causal time.
Thus, a depth-\(D\) circuit family that implements HTR
induces an execution of the HTR process completing in at most
\(D\) units of causal time.

If a polylogarithmic-depth circuit family could implement HTR,
the resulting execution would have
\[
D = \mathrm{polylog}(N)
\]
units of causal time.

By Theorem~\ref{thm:lower-bound}, any execution respecting hop-by-hop
causality and finite per-hop communication capacity requires
$\Omega(N)$ units of causal time.
Since $\mathrm{polylog}(N) = o(N)$, this is impossible for sufficiently
large $N$.
\end{proof}

\section{Causality, Semantics, and Computational Abstraction}
\label{sec:causality-abstraction}

Classical complexity classes are defined in terms of logical input-output
relations and abstract machine models.
They deliberately idealize away many aspects of physical execution,
including causal time, bandwidth limits, and ordered state evolution.
Such idealizations are appropriate - and often essential - for reasoning
about computational power at the level of predicates and reductions.

The HTR model departs from this paradigm in a controlled but fundamental way.
Here the encoding of an instance does not merely represent data but specifies
a causally ordered execution in which intermediate states have semantic
significance.
Correctness therefore depends not only on the final input-output relation,
but on the existence of a valid causal trajectory realizing it.
This perspective invites a reinterpretation of classical complexity
abstractions in terms of causal realizability and information flow.
In the following subsections we examine this viewpoint through the role of
encoding and semantics, a causal reading of
$\mathbf{P}$, $\mathbf{NP}$, and $\mathbf{NC}$, and its relation to
sequential information-acquisition processes that arise under intrinsic
causal constraints.

\subsection{Encoding, Semantics, and Causal Execution}

Classical complexity theory treats encodings as neutral representations of data.
Inputs are considered equivalent up to polynomial-time reductions, and the
semantics of a problem are identified solely with its input-output relation.
This perspective is appropriate for predicate evaluation, where intermediate
computational states carry no independent meaning.

HTR departs from this paradigm in a principled way.
Here, the encoding itself specifies a required causal execution: the ordered
structure of the input determines which operations may occur and in what order.
Later steps of the computation are semantically defined only relative to the
successful completion of earlier ones.
If this causal structure is ignored, rearranged, or bypassed, the computation is
no longer correct.

Consequently, encoding in HTR is not a purely syntactic choice but part of the
semantic specification of the problem.
Any correct representation must expose the hierarchical order of the instance.

\subsection{A Causal Reading of P, NP, and NC}

The linear lower bound of Theorem~\ref{thm:lower-bound} shows that any execution
of HTR respecting hop-by-hop causality and finite communication capacity must
take $\Omega(N)$ units of causal time.
Polylogarithmic-depth parallelism is therefore impossible without abandoning
these constraints.

From this perspective, the classical complexity classes admit a causal reading:

\begin{itemize}

\item \textbf{P:}
Deterministic computation proceeds through explicit causal transitions.
HTR lies in $\mathbf{P}$ because its execution is faithfully realized by the
step-by-step transition semantics of a deterministic Turing machine.

\item \textbf{NP:}
Nondeterminism abstracts away the causal acquisition of information by allowing
a computation to guess a certificate instantaneously and then verify it
deterministically.
Any causal search required to obtain the witness is not represented within the
model, but is absorbed into the nondeterministic guessing step; the modeled
execution therefore consists only of the deterministic verification.

\item \textbf{NC:}
Circuit-based parallelism abstracts away causal propagation by collapsing depth
and permitting global aggregation of information in polylogarithmic time.

\end{itemize}

This interpretation concerns the abstraction of execution rather than the
formal relationships between complexity classes.
The standard definitions of $\mathbf{P}$, $\mathbf{NP}$, and $\mathbf{NC}$
remain unchanged.
It emphasizes only that HTR is a problem whose semantics require a
causally faithful realization, and therefore falls outside models of
computation that ignore ordered information propagation.

\subsection{Sequential Information Acquisition under Intrinsic Causal Constraints}

The causal structure of the HTR process resembles
sequential information-acquisition problems that arise in
breeding and selection programs, adaptive experiments,
sequential drug trials, and real-options decision processes.
In such settings an objective is reached through a sequence
of intermediate decisions whose outcomes become known only
progressively, so that the uncertainty lies in the evolution
of the decision path itself rather than only in the final outcome.

An intuitive analogy is searching for a book in a library whose shelves
are organized as a hierarchy of rooms.
At each level one must read a routing code that determines which room to
enter next, and only the code obtained at the current level reveals where
the book may be located at the next level.
Even if the total number of books is exponential, the search cannot be
parallelized, because the correct path must be discovered step by step.
This mirrors the HTR process, where each relay reveals only bounded routing
information and the token must advance sequentially through the hierarchy.

From an information-theoretic perspective this produces a cascade of
observations in which uncertainty about the underlying path is resolved
stage by stage.
If each stage reveals only limited information, then identifying a trajectory
with entropy $H(M)$ requires $\Omega(H(M))$ sequential stages.

The HTR model isolates this phenomenon in a minimal computational form.
The hierarchical address represents the unknown routing path through the
relay network, and each relay step reveals only bounded information.
The resulting $\Omega(N)$ causal time reflects a general principle:
when information about a target trajectory must be acquired through a
causally ordered sequence of bounded-capacity observations, the process
cannot be asymptotically parallelized.

\section{Related Work}

Several lines of research study limits of parallelism arising from
communication constraints and sequential dependence.
Distributed round-based models such as \textsc{LOCAL} and \textsc{CONGEST}
analyze the number of communication rounds required to solve global tasks
in networks: \textsc{CONGEST} bounds per-edge bandwidth (typically
$O(\log n)$ bits per round), while \textsc{LOCAL} allows unbounded
messages but restricts information propagation to one hop per round.
Similarly, VLSI area-time tradeoff results limit parallel speedup by
accounting for physical interconnect constraints and wiring congestion.
Recent work on physically realizable or geometry-aware circuit models
incorporates locality and communication constraints directly into the
circuit abstraction.

\paragraph{Causal diameter in distributed systems.}
In distributed computing, the notion of \emph{causal diameter}
characterizes the time required for information originating at one node
to influence another through a chain of causally ordered messages.
This concept appears in analyses of dynamic networks, consensus
protocols, and asynchronous systems, where information propagation is
limited by message-passing delays and causal dependencies.

The \textsf{HTR} model can be viewed as a computational abstraction of
this phenomenon.
It isolates a causal chain in which routing information must propagate
sequentially through a hierarchy of relays.
The resulting $\Omega(N)$ lower bound reflects the causal diameter
imposed by finite-rate information transmission.

The present work differs from these approaches in a key respect.
Rather than imposing causality and bandwidth limits as restrictions on
the computational model, the \textsf{HTR} problem treats the hop-by-hop
causal process as part of the \emph{instance semantics}.
Each instance encodes a prescribed execution in which routing
information must propagate sequentially along a causal path through the
relay chain.
This shifts the focus from computing a final output value to revealing
the execution trajectory itself.
Consequently, the sequentiality of the process can be analyzed using
information-theoretic tools, interpreting the execution as a relay
communication process with bounded per-step capacity.

\section{Concluding Remarks}

The Hierarchical Temporal Relay (HTR) model illustrates a polynomial-time
problem whose semantics require a prescribed causal execution.
A single non-duplicable token must traverse an $N$-step relay chain,
and each step can reveal only bounded routing information.
Consequently, information identifying the execution path can propagate
through the system only at a finite rate.
Using standard information-theoretic arguments, we show that any
execution respecting these causal constraints requires $\Omega(N)$
units of causal time.

This implies that polylogarithmic-depth circuit families cannot
implement the HTR process when circuit depth is interpreted as
realizable parallel time.
The limitation therefore arises not from computational hardness,
but from the causal structure of the task itself:
parallel hardware cannot compress a fixed causal chain of
information propagation.

More broadly, HTR highlights a class of polynomial-time problems in
which instances encode a required causal execution rather than a
static predicate evaluation.
For such problems, sequentiality reflects limits on information flow
through a causal process rather than limits on computational power.
This perspective shifts attention from uncertainty about the final
result to uncertainty about the causal path leading to it.
Resolving this trajectory requires information to propagate step by
step along the execution chain.

In this sense, the HTR viewpoint parallels the role of communication
complexity in classical theory.
While communication complexity isolates the information that must be
exchanged between parties, HTR isolates the information that must
propagate along a causal execution path leading to the computation.

Clarifying this distinction between logical parallelism and causal
executability may help guide the development of computational models
that better capture computation as it occurs in physical space-time.

\end{document}